\documentclass[a4paper]{article}

\usepackage{amsmath,amsfonts,amssymb,amsthm,amscd,enumerate}
\usepackage[all]{xy}
\usepackage[active]{srcltx}

\usepackage[dvipsnames]{color}

\newtheorem{thm}{Theorem}[section]
\newtheorem{prop}[thm]{Proposition}

\newtheorem{cor}[thm]{Corollary}

\theoremstyle{definition}
\newtheorem{defn}[thm]{Definition}

\newcommand{\nc}[2]{\newcommand{#1}{#2}}
\newcommand{\rnc}[2]{\renewcommand{#1}{#2}}
\rnc{\[}{\begin{equation}}
\rnc{\]}{\end{equation}}
\nc{\wegengruen}{\end{equation}}

\newcommand{\Z}{\mathbb{Z}}
\newcommand{\N}{\mathbb{N}}

\newcommand{\C}{\mathbb{C}}
\newcommand{\hsp}{{\hspace{-1pt}}}
\newcommand{\hs}{{\hspace{1pt}}}

\newcommand{\hH}{\mathcal{H}}
\newcommand{\cO}{\mathcal{O}}

\newcommand{\K}{{\mathcal{K}}}

\newcommand{\A}{{\mathcal{A}}}
\newcommand{\B}{\mathcal{B}}
\newcommand{\cC}{{\mathcal{C}}}

\newcommand{\M}{M}  

\newcommand{\Cq}{\cO(\C_q)}  
\newcommand{\dd}{\mathrm{d}}

\newcommand{\im}{\mathrm{i}}
\newcommand{\e}{\mathrm{e}}

\newcommand{\id}{{\mathrm{id}}}

\newcommand{\spec}{\mathrm{spec}}
\newcommand{\dom}{\mathrm{dom}}
\newcommand{\supp}{\mathrm{supp}}
\newcommand{\rmB}{B}  
\newcommand{\alg}{\text{*-}\mathrm{alg}}
\newcommand{\LL}{\mathcal{L}_2}
\newcommand{\tz}{{\tilde \zeta}}

\newcommand{\ra}{\rightarrow}
\newcommand{\lra}{\longrightarrow}
\newcommand*{\lhra}{\ensuremath{\lhook\joinrel\relbar\joinrel\rightarrow}}

\newcommand{\ip}[2]{\langle{#1},{#2}\rangle}


\title{A noncommutative 2-sphere generated by the quantum complex plane}

\author{{\sc Ismael Cohen} \\
\normalsize
Instituto de F\'isica y Matem\'aticas\\
\normalsize
Universidad Michoacana de San Nicol\'as de Hidalgo, Morelia, M\'exico\\[2pt]
\normalsize
and\\[2pt]
\normalsize
Centro de Ciencias Matem\'aticas, Campus Morelia\\
\normalsize
Universidad Nacional Aut\'onoma de M\'exico (UNAM), Morelia, M\'exico\\
\normalsize
e-mail: {\it ismaelcohen10@gmail.com}\\[16pt] 
{\sc Elmar Wagner\footnote{
corresponding author \ \  {\it MSC2010:} 46L65;  58B32. \ \  
{\it Key Words:} C*-algebra, unbounded elements, q-normal operator, quantum plane, quantum sphere.}
} \\
\normalsize
Instituto de F\'isica y Matem\'aticas\\
\normalsize
Universidad Michoacana de San Nicol\'as de Hidalgo, Morelia, M\'exico\\
\normalsize
e-mail: {\it elmar@ifm.umich.mx}}

\date{}

\usepackage{geometry}
\geometry{paperwidth=195mm, paperheight=280mm}

\begin{document}
\maketitle

\begin{abstract}
S. L. Woronowicz's theory of introducing C*-algebras generated by unbounded elements
is applied to $q$-normal operators satisfying the defining relation of the 
quantum complex plane.  The unique non-degenerate C*-algebra of bounded operators generated by 
a $q$-normal operator is computed and an abstract description is given by 
using crossed product algebras. If the spectrum of the modulus of the 
$q$-normal operator is the positive half line, this C*-algebra will be considered as the algebra of 
continuous functions on the quantum complex plane vanishing at infinity, and its 
unitization will be viewed as the algebra of 
continuous functions on a quantum 2-sphere. 
 \end{abstract}

\section{Introduction}                                 \label{sec:intro}
In his seminal paper \cite{Wo}, S.~L.~Woronowicz introduced the concept of C*-algebras 
generated by unbounded elements. His main motivation was to provide a proper mathematical 
framework for a topological  theory of non-compact quantum groups (cf.~\cite{PS,WoE2}). 
The basic idea is to use an affiliation relation to give a precise meaning to the statement that 
a finite set of (unbounded) operators satisfying certain relations generates a given C*-algebra. 
Naturally, one expects that a C*-algebra generated by unbounded operators is 
non-unital. This algebra is then viewed as the algebra of continuous functions vanishing at infinity on 
the underlying quantum space. 

Since compact spaces are better behaved than non-compact ones, 
we can pass to the compact case by adjoining a unit which corresponds to 
the one-point compactification of a locally compact space. 
The compactification may also turn a topologically trivial space into a non-trivial one,
as it is the case when considering for instance a sphere as the 
one-point compactification of the Euclidean plane. 
It is a non-commutative counterpart of this example that we want to study  
within Woronowicz's framework in the present paper. 

Our starting point is the quantum complex plane given as the complex *-algebra 
$\Cq$ generated by $\zeta$ and $\zeta^*$ satisfying the relation 
\[   												\label{zz*}
     \zeta \hs \zeta^* \,= \,q^2\hs \zeta^*\hs \zeta, 
\]
where, throughout the paper, we assume that $q\in(0,1)$.
A representation of $\Cq$ is given by a densely defined closed linear operator $\zeta$ 
on a separable Hilbert space satisfying~\eqref{zz*}. 
Such operators are known as $q$-normal operators  (or  $q^2$-normal operators). 
On the contrary to usual normal operators, non-zero $q$-normal operators are never 
bounded. In particular, they do not generate themselves a C*-algebra. 
This naturally  motivates the use of Woronowicz's theory for the 
study of the quantum complex plane $\Cq$ in the C*-algebra setting.  

The main result of this paper is an explicit description of the unique non-degenerate C*-algebra of 
bounded operators generated by a $q$-normal operator. However, since the 
algebra of polynomial functions on the quantum complex plane $\Cq$ is defined in a purely algebraic manner without 
referring to a Hilbert space, we prefer to give an (almost) Hilbert space free description and state our main theorem 
(Theorem \ref{thm}) in terms of crossed product C*-algebras. 
If the spectrum of the modulus $|\zeta|$ of the $q$-normal operator $\zeta$ is the positive half line $[0,\infty)$, the 
generated C*-algebra has an obvious interpretation as the algebra of continuous functions 
on the quantum complex plane vanishing at infinity. 
Its one-point compactification, given  by 
adjoining a unit,  will be viewed as the C*-algebra of continuous functions on 
a quantum 2-sphere.

\section{Representations of the quantum complex plane}                      \label{reps}
Although its main motivation comes from abstract 
algebras given by generators and relations, 
Woronowicz's framework starts by considering 
a set of unbounded operators on a Hilbert space 
affiliated with a C*-algebra of bounded operators \cite{Wo91}. 
For this reason, we are interested in operators on a separable Hilbert space 
satisfying the relation \eqref{zz*} in an appropriate sense. 
A natural choice is to assume 
$\zeta$ to be  $q$-normal. 
Such operators have been studied, e.\;g., in \cite{CSS,OS,Ota}.  
In this section, we will collect some facts about 
$q$-normal operators following closely 
the lines of \cite[Section~2]{CSS}. 

\begin{prop}[\cite{CSS,OS}]                       \label{qnormal}
Let $\zeta$ be a densely defined closed operator on a Hilbert space~$\hH$ and $\zeta = u\hs |\zeta|$ its polar
decomposition. Let $E$ denote the projection-valued measure on the Borel $\sigma$-algebra 
$\Sigma([0,\infty))$ such that $|\zeta|= \int \hsp\lambda\, \dd E(\lambda)$. 
Then the following statements are equivalent:
\begin{enumerate}[i)]
\item $\zeta$ is a $q$-normal operator, that is, $\zeta\zeta^* = q^2 \zeta^* \zeta$. 
\item $u\hs|\zeta|\hs u^* =q |\zeta|.$
\item $u \hs E(M)\hs u^* = E(q^{-1}M)$ for all $M\in \Sigma([0,\infty))$.  \label{iii}
\item $u\hspace{0.5pt}  f(|\zeta|) \hspace{0.5pt}  u^*\hsp =\hsp f(q|\zeta|)$ for every Borel function $f$ on $[0,\infty)$, 
where \label{iv}
$ f(|\zeta|)\hsp:=\!\int \! f(\lambda) \dd E(\lambda).
$ 
\end{enumerate}
\end{prop}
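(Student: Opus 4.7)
The four statements are spectral-theoretic reformulations of the $q$-normality relation, linked through the polar decomposition and the functional calculus of $|\zeta|$. My plan is to establish the cyclic chain (i)~$\Rightarrow$~(ii)~$\Rightarrow$~(iv)~$\Rightarrow$~(iii)~$\Rightarrow$~(i). The step (iv)~$\Rightarrow$~(iii) is immediate by specialising to $f=\chi_M$, since $\chi_M(|\zeta|)=E(M)$ and the spectral measure of $q|\zeta|$ is $M\mapsto E(q^{-1}M)$ by change of variables, so that $\chi_M(q|\zeta|)=E(q^{-1}M)$.

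For (i)~$\Rightarrow$~(ii), I would exploit the polar decomposition to compute $\zeta^*\zeta=|\zeta|^2$ and $\zeta\zeta^*=u|\zeta|^2u^*$ as closed operators. Since $\ker u=\ker|\zeta|$, the projection $u^*u$ acts as the identity on the range of $|\zeta|$, yielding the factorisation $u|\zeta|^2u^*=(u|\zeta|u^*)^2$. A useful preliminary observation is that $q$-normality forces $\ker\zeta=\ker\zeta^*$ (since $\zeta\zeta^*$ and $\zeta^*\zeta$ then have the same kernel), hence $uu^*=u^*u$, so that $u$ restricts to a unitary on $(\ker\zeta)^\perp$ and $u|\zeta|u^*$ is a bona fide positive self-adjoint operator. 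The identity $(u|\zeta|u^*)^2=(q|\zeta|)^2$ then yields (ii) by uniqueness of positive square roots.

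For (ii)~$\Rightarrow$~(iv), I would work on $(\ker\zeta)^\perp$, where $u$ is unitary. Conjugation by $u$ there is a $\ast$-automorphism which is weakly continuous, so it extends from polynomials in $|\zeta|$ to the full bounded Borel functional calculus. Since (ii) fixes its action on the identity function $\lambda\mapsto\lambda$, a monotone-class argument gives $u\,f(|\zeta|)\,u^*=f(q|\zeta|)$ for every bounded Borel $f$; the unbounded case then follows from the spectral-integral definition $f(|\zeta|)=\int f(\lambda)\,\dd E(\lambda)$ together with the identity $f(q|\zeta|)=\int f(q\lambda)\,\dd E(\lambda)$. Finally, for (iii)~$\Rightarrow$~(i), I would integrate (iii) against $\lambda^2$ to obtain $u|\zeta|^2u^*=q^2|\zeta|^2$, and then combine with $\zeta\zeta^*=u|\zeta|^2u^*$ and $\zeta^*\zeta=|\zeta|^2$ to recover the defining relation.

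The main obstacle throughout is careful bookkeeping of domains when conjugating the unbounded operator $|\zeta|$ by a partial isometry rather than a unitary. This is where the above observation that $q$-normality itself forces $uu^*=u^*u$ is crucial: once $u$ is recognised as a genuine unitary on the invariant subspace $(\ker\zeta)^\perp$ (with everything vanishing on $\ker\zeta$), the spectral-theoretic transport becomes routine and the interlocking implications follow cleanly.
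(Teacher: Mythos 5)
The paper does not prove this proposition---it is quoted from \cite{CSS,OS}---so there is nothing internal to compare against; judged on its own, your cyclic chain (i)$\Rightarrow$(ii)$\Rightarrow$(iv)$\Rightarrow$(iii)$\Rightarrow$(i) is the standard argument and its key steps are right: $\zeta\zeta^*=u|\zeta|^2u^*=(u|\zeta|u^*)^2$ together with uniqueness of positive square roots gives (ii), and transport of the Borel functional calculus under unitary conjugation gives (iv); the specialisations $f=\chi_M$ and $f(\lambda)=\lambda^2$ close the loop.

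Two caveats are worth recording. First, a small circularity: in the step (ii)$\Rightarrow$(iv) you use that $u$ restricts to a unitary of $(\ker\zeta)^\perp$, but you justify $\ker\zeta=\ker\zeta^*$ only from (i). Within a cyclic proof you must extract it from (ii) alone; this is possible (from $u|\zeta|u^*=q|\zeta|$ one compares ranges and kernels of the two sides to get $\ker\zeta^*\subseteq\ker\zeta$ and, applying both sides to $h\in\ker\zeta$ and using that $u^*h$ lies in $(\ker\zeta)^\perp$ while $|\zeta|u^*h\in\ker u\cap\overline{\mathrm{ran}}\,|\zeta|=\{0\}$, the reverse inclusion), but it should be said. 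Second, and more substantively, items (iii) and (iv) are false as literally stated when $\ker\zeta\neq\{0\}$: taking $M=\{0\}$ in (iii) gives $uE(\{0\})u^*=0$ while $E(q^{-1}\{0\})=E(\{0\})\neq 0$, and taking $f\equiv 1$ in (iv) gives $uu^*\neq\mathrm{id}$ on the left and $\mathrm{id}$ on the right. So your implication (ii)$\Rightarrow$(iv) can only produce the identity on $(\ker\zeta)^\perp$, which is exactly what your argument delivers. This is an imprecision inherited from the statement rather than a flaw in your reasoning---the identities must be read on $(\ker\zeta)^\perp$, equivalently for $M\subseteq(0,\infty)$ and for $f$ with $f(0)=0$, which is the only way the paper ever uses them (e.g.\ in \eqref{Eq1} and \eqref{zhn})---but a complete write-up should either add that hypothesis or restrict the class of $M$ and $f$.
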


For a more explicit description of $q$-normal operators, note that 
$$
 \ker(\zeta)=\ker(\zeta^*)= \ker(|\zeta|)= E(\{0\})\hH. 
$$
Set 
\[ \label{Eq1}
 \hH_n:=E((q^{n+1}, q^n])\hH, \quad n\in\Z. 
\]
By Proposition \ref{qnormal} \ref{iii}), $u : \hH_n \ra \hH_{n-1}$ is an isomorphism. 
Therefore we can write 
\[
\hH_n = \{ h_n := u^{*n} h \,:\, h\in\hH_0 \}.  \label{Hn}
\]
Define $A: \hH_0 \ra \hH_0$ by 
$A:= \int_{(q,1]} \lambda\,\dd E(\lambda)$.  
Then, by Proposition \ref{qnormal} \ref{iv}), 
\[ \label{zhn}
\zeta\hs h_n= u\hs|\zeta|\hs u^{*n}\hs h=q^nu^{*(n-1)} A\hs h =  q^n(A\hs h)_{n-1} 
\]
on  $\ker({\zeta})^\perp= \underset{n\in\Z}{\oplus} \hH_n$. 
%
From the preceding, we get  the following description of $q$-normal operators. 
\begin{cor} \label{corz}
Let $\zeta$ be a non-zero $q$-normal operator on a Hilbert space $\hH$. 
Up to unitary equivalence, the action of $\zeta$ is determined by the 
following formulas: 
There exists a Hilbert space $\hH_0$ such that 
$\hH$ decomposes into the direct sum 
$\hH=\ker({\zeta})\oplus \underset{n\in\Z}{\oplus} \hH_n$ 
with $\hH_n = \hH_0$. 
For $h\in \hH_0$, let $h_n$ denote the vector in $\hH$ which has 
$h$ in the $n$-th component of the direct sum $\underset{n\in\Z}{\oplus} \hH_n$ 
and $0$ elsewhere. Then 
there exist a self-adjoint operator $A$ on $\hH_0$,  
satisfying that $\spec(A)\subset [q,1]$ with $q$ not being an eigenvalue,  such that  
$$
\zeta\hs h_n=  q^nA\hs h_{n-1} \quad \text{for all}\ \ h_n\in\hH_n.
$$
\end{cor}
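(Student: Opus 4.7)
The plan is to read off the statement directly from the preparatory material already assembled in the section: the polar decomposition $\zeta=u\hs|\zeta|$, the spectral measure $E$ of $|\zeta|$, and the covariance relation $u\hs E(M)\hs u^*=E(q^{-1}M)$ from Proposition~\ref{qnormal}\ref{iii}). The intervals $(q^{n+1},q^n]$, $n\in\Z$, partition $(0,\infty)$ because $q\in(0,1)$, so together with the eigenspace $\ker(\zeta)=E(\{0\})\hH$ the subspaces $\hH_n=E((q^{n+1},q^n])\hH$ defined in \eqref{Eq1} give the orthogonal decomposition $\hH=\ker(\zeta)\oplus\bigoplus_{n\in\Z}\hH_n$.

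Next, I would use Proposition~\ref{qnormal}\ref{iii}) with $M=(q^{n+1},q^n]$ to conclude that $u$ restricts to a unitary isomorphism $\hH_n\to\hH_{n-1}$; iterating, the map $h\mapsto h_n:=u^{*n}h$ identifies $\hH_0$ with each $\hH_n$, as indicated in \eqref{Hn}. The formula for the action of $\zeta$ is then precisely the computation carried out in \eqref{zhn}: with $A:=\int_{(q,1]}\lambda\,\dd E(\lambda)$ viewed as an operator on $\hH_0$, one has $\zeta\hs h_n=u\hs|\zeta|\hs u^{*n}h=q^n\hs u^{*(n-1)}Ah=q^n(Ah)_{n-1}$, which under the identification $\hH_{n-1}\cong\hH_0$ reads $\zeta\hs h_n=q^n A\hs h_{n-1}$. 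Unitary equivalence to the model is witnessed by the family of unitaries $h\mapsto h_n$ from $\hH_0$ onto the summands $\hH_n$.

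It remains to verify the spectral properties of $A$. Self-adjointness and $\spec(A)\subset\overline{(q,1]}=[q,1]$ follow at once from the functional calculus, since $A$ is obtained by restricting $\int\lambda\,\dd E(\lambda)=|\zeta|$ to the spectral subspace $\hH_0=E((q,1])\hH$. To exclude $q$ as an eigenvalue, I would argue that an eigenvector $h\in\hH_0$ with $Ah=q\hs h$ would satisfy $h\in E(\{q\})\hH$, whereas $E(\{q\})\hH\cap\hH_0=E(\{q\}\cap(q,1])\hH=\{0\}$ because the interval is half-open at $q$.

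No step here is a genuine obstacle; the only point worth a second glance is the half-open choice of intervals, which is exactly what forces $q$ not to be an eigenvalue of $A$ and simultaneously guarantees that the spaces $\hH_n$ tile $\ker(\zeta)^\perp$ without overlap. The essence of the corollary is therefore a repackaging of Proposition~\ref{qnormal} together with the computation \eqref{zhn}, organised so as to give a Hilbert-space model depending only on the single datum $(\hH_0,A)$.
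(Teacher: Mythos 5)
Your proposal is correct and follows essentially the same route as the paper: the corollary is obtained there, as here, by combining the decomposition $\hH=\ker(\zeta)\oplus\bigoplus_{n\in\Z}\hH_n$ with $\hH_n=E((q^{n+1},q^n])\hH$, the identification $\hH_n\cong\hH_0$ via $u^{*n}$ from Proposition~\ref{qnormal}\ref{iii}), and the computation \eqref{zhn} with $A=\int_{(q,1]}\lambda\,\dd E(\lambda)$. Your additional remark that the half-open interval $(q,1]$ is what excludes $q$ as an eigenvalue of $A$ is exactly the intended justification of that clause.
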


It is well known \cite[Theorem VII.3]{RS} that each self-adjoint operator $T$ on a separable Hilbert space is 
unitarily equivalent to a direct sum of multiplication operators $\hat x$ on 
$\LL(\spec(T), \mu)$, \,where $(\hat{x}f)(x):= xf(x)$. 
We will apply this fact to the operators $|\zeta|$ and $A$ from the last corollary. 
Setting $\mu([0,\infty)\setminus \spec(|\zeta|))=0$, we may assume that 
$\hH=\LL([0,\infty), \mu)$. 
The spectral projections $E(M)$, $M\in\Sigma([0,\infty))$, from Proposition \ref{qnormal} are then given by 
multiplication with the indicator function $\chi_M$, that is, $(E(M)f)(x) =\chi_M(x)\hs f(x)$. 
Applying Proposition \ref{qnormal} \ref{iv}) to the Borel functions $\chi_M$ shows 
that $\mu$ is $q$-invariant:  $\mu(q M)= \mu (M)$ for all $M\in  \Sigma([0,\infty))$. 
As a consequence, $u: \hH \ra\hH$, \,$(uf)(x):=f(qx)$ is unitary. 
One easily checks that $\zeta:=u\hs\hat x$ defines a $q$-normal operator. 
It has been shown in  \cite{CSS} that any $q$-normal operator is unitarily 
equivalent to a direct sum of such operators. 
\begin{thm}[\cite{CSS}]   \label{zf}
Any $q$-normal operator is unitarily 
equivalent to a direct sum of operators of the following form: 
$\hH= \LL([0,\infty), \mu)$, where $\mu$ is a $q$-invariant Borel measure on 
$[0,\infty)$, $\dom(\zeta)= \{ f\in\hH\,:\,  \int_{[0,\infty)}  x^2\, |f(x)|^2\hs \dd\mu(x)< \infty\}$,  and 
\[ \label{L2rep}
(\zeta f)(x)=qxf(qx),\quad (\zeta^*f)(x)=xf(q^{-1}x)\quad \text{for all}\ \,f\in\dom(\zeta). 
\]
Moreover, for each $q$-invariant Borel measure $\mu$ on $[0,\infty)$, 
Equation \eqref{L2rep} defines a $q$-normal operator. 
\end{thm}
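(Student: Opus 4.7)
The plan is to build the stated $L^2$-representation directly on top of Corollary \ref{corz} and the spectral-multiplicity theorem for $A$, then verify the converse by a direct computation.

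First I would start with a non-zero $q$-normal operator $\zeta$ and invoke Corollary \ref{corz} to reduce to the following canonical form: $\hH = \ker(\zeta) \oplus \bigoplus_{n\in\Z}\hH_n$ with each $\hH_n$ a copy of a fixed Hilbert space $\hH_0$, with $A$ a self-adjoint operator on $\hH_0$ satisfying $\spec(A)\subset[q,1]$ and no eigenvalue at $q$, and with $\zeta h_n = q^n A h_{n-1}$. The multiplicity that the spectral theorem yields for $|\zeta|$ is encoded in $\hH_0$, so my next step is to apply the spectral multiplicity theorem to the single operator $A$: write $\hH_0 = \bigoplus_\alpha \LL((q,1],\nu_\alpha)$ on which $A$ acts as multiplication $\hat x$, where each $\nu_\alpha$ is a Borel measure on $(q,1]$ (assigning zero mass to $\{q\}$ since $q$ is not an eigenvalue). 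This decomposition of $\hH_0$ induces a corresponding decomposition of each $\hH_n$, and hence splits the non-kernel part of $\hH$ into an orthogonal sum of $\zeta$-invariant pieces indexed by $\alpha$.

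Now fix one index $\alpha$ and construct the corresponding summand. Define a Borel measure $\mu_\alpha$ on $(0,\infty)$ by $\mu_\alpha(M) := \nu_\alpha(q^{-n}M)$ for $M\subset(q^{n+1},q^n]$; this glues the rescaled copies of $\nu_\alpha$ on the dyadic-like intervals $(q^{n+1},q^n]$ and is $q$-invariant by inspection. The unitary map sending $h_n \in \hH_n$ (represented by $g\in\LL((q,1],\nu_\alpha)$) to the function $f(x) := g(q^{-n}x)$ supported on $(q^{n+1},q^n]$ identifies the $\alpha$-summand of the non-kernel part with $\LL((0,\infty),\mu_\alpha)$. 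Under this identification $|\zeta|$ becomes multiplication by $x$, while the partial isometry $u$ acts as $(uf)(x) = f(qx)$ (well-defined and unitary precisely because $\mu_\alpha$ is $q$-invariant). Hence $\zeta = u\hs|\zeta|$ acts by $(\zeta f)(x) = qxf(qx)$, and a standard computation of the adjoint in an inner product with $q$-invariant measure gives $(\zeta^*f)(x) = xf(q^{-1}x)$. The kernel of $\zeta$, if nontrivial, can be absorbed by adding a summand with $\mu = c\hs\delta_0$ (or any atom at $0$) of the appropriate multiplicity, for which the formula trivially yields the zero operator.

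For the converse, I would take an arbitrary $q$-invariant Borel measure $\mu$ on $[0,\infty)$, declare $\dom(\zeta)$ as in the statement, and check that $\zeta$ so defined is closed, densely defined (compactly supported $\mu$-square-integrable functions lie in $\dom(\zeta)$), with adjoint given by the stated formula; both the closedness and the form of $\zeta^*$ follow from factoring $\zeta = u\hs\hat x$ with $\hat x$ self-adjoint and $u$ unitary, the latter being a direct consequence of $q$-invariance of $\mu$. The defining relation $\zeta\zeta^* = q^2\zeta^*\zeta$ is then verified pointwise: on $f\in\dom(\zeta^*\zeta)$ one has $(\zeta^*\zeta f)(x) = x^2 f(x)$ and $(\zeta\zeta^* f)(x) = q^2x^2 f(x)$, giving the identity on the common core and thus on the appropriate domains. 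The only genuinely delicate step in the whole argument is the careful bookkeeping of measures in Paragraph 2 — ensuring that the rescaling $x\mapsto q^n x$ produces compatible $\nu_\alpha$-copies that glue into a single $q$-invariant $\mu_\alpha$ — everything else being essentially formal once Corollary \ref{corz} is in hand.
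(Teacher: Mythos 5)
Your proof is correct. Note that the paper does not actually prove this theorem --- it is quoted from \cite{CSS} --- and only assembles the ingredients in the surrounding text: it applies the multiplication-operator form of the spectral theorem to $|\zeta|$ itself, observes that Proposition~\ref{qnormal}~\ref{iv}) applied to indicator functions forces the underlying measure to be $q$-invariant, and then asserts that $u$ becomes the dilation $(uf)(x)=f(qx)$. Your route is genuinely different in one respect, and it is the right way to make that sketch rigorous: you diagonalize $A$ on $\hH_0$ by the multiplicity theorem, transport the resulting decomposition to every $\hH_n$ via $u^{*n}$, and glue the rescaled copies of each $\nu_\alpha$ into a single $q$-invariant measure $\mu_\alpha$ --- this gluing is exactly the paper's formula \eqref{mu0}. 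The payoff is that the direct summands are automatically $u$-invariant and $u$ is the dilation operator \emph{by construction}; if one instead decomposes $\hH$ by the multiplicity theory of $|\zeta|$ alone, one must still argue that the summands can be chosen $u$-invariantly and that the unitary implementing $E(M)\mapsto E(q^{-1}M)$ is the dilation rather than some other intertwiner, a point the paper's one-line sketch glosses over. Your verification of the converse ($\zeta=u\hs\hat x$ with $u\hs\hat x\hs u^*=q\hs\hat x$, whence $\zeta\zeta^*=u\hat x^2u^*=q^2\hat x^2=q^2\zeta^*\zeta$ on the common domain) matches the paper's remark and is complete.
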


Note that a $q$-invariant measure  $\mu$ is uniquely determined by 
the value $\mu(\{0\})$ and 
its restriction 
$\mu_0:=\mu\!\!\upharpoonright_{ \Sigma([0,\infty))\cap (q,1]}$ via 
\[ \label{mu0}
\mu(M) = \sum_{k\in\Z} \mu_0 ( q^{-k}(M\cap (q^{k+1},q^k])) + \mu(M\cap\{0\} ).
\]
On the other hand, this formula defines for any measure $\mu_0$ on the Borel 
$\sigma$-algebra $\Sigma((q,1])$ 
a \hs$q$-invariant measure $\mu$ on $\Sigma([0,\infty))$. 
Once the value $\mu(\{0\})$ is fixed, this measure is unique. 

Suppose we are given a closed non-empty $q$-invariant set $ X\subset[0,\infty)$.  
Here, the $q$-in\-var\-ian\-ce means  $qX=X$. 
Since $K:=[q,1]\cap X$ is compact (and thus has a countable dense subset), 
there exists a finite Borel measure $\nu$ on $[q,1]$ such that 
$\supp(\nu)=K$, see~\cite{Ha}. Let $\delta_x$ denote the Dirac measure at $x\in[0,\infty)$ and 
define  $\mu_0$ to be the restriction of 
$\nu + \nu(\{q\})\delta_{1} + \nu(\{1\})\delta_{q}$ 
to $\Sigma([0,\infty))\cap (q,1]$. If $X=\{0\}$, set  $\mu(\{0\}):=1$. Then the measure $\mu$ determined  
by the formula in  \eqref{mu0} is a $q$-invariant $\sigma$-finite  Borel measure on 
$[0,\infty)$ such that $\supp(\mu)=X$. 

Recall that the multiplication operator $\hat x$ 
on $\LL([0,\infty), \mu)$ satisfies 
$\spec(\hat x) = \supp(\mu)$.  
Combining the discussion of the last paragraph with 
Theorem \ref{zf} gives the following corollary. 

\begin{cor} \label{cor}
For each non-empty $q$-invariant closed subset $X\subset[0,\infty)$, there exists 
a $q$-normal operator $\zeta$ such that $\spec(|\zeta|)=X$. 
\end{cor}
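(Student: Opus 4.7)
The proof is a construction, and the plan is to assemble the pieces already laid out in the paragraph preceding the statement. I will build a $q$-invariant $\sigma$-finite Borel measure $\mu$ on $[0,\infty)$ with $\supp(\mu)=X$, and then invoke Theorem~\ref{zf} to obtain a $q$-normal operator $\zeta$ on $\hH=\LL([0,\infty),\mu)$ whose modulus satisfies $\spec(|\zeta|)=X$.

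First I dispose of the trivial case $X=\{0\}$: take $\mu:=\delta_0$, so that formula \eqref{L2rep} produces the zero operator, which is vacuously $q$-normal with $|\zeta|=0$ and $\spec(|\zeta|)=\{0\}$. Otherwise $X$ contains a non-zero point, and by the $q$-invariance $qX=X$ the set $K:=[q,1]\cap X$ is a non-empty compact subset of $[q,1]$. Since $K$ is compact and metrizable, it has a countable dense subset; a suitably weighted sum of point masses along such a subset produces a finite Borel measure $\nu$ on $[q,1]$ with $\supp(\nu)=K$. I then define $\mu_0$ as the restriction of $\nu+\nu(\{q\})\delta_1+\nu(\{1\})\delta_q$ to $\Sigma([0,\infty))\cap(q,1]$ and extend $\mu_0$ to a $q$-invariant Borel measure $\mu$ on $[0,\infty)$ via \eqref{mu0}, setting $\mu(\{0\}):=1$ if $0\in X$ and $\mu(\{0\}):=0$ otherwise.

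The core verification is $\supp(\mu)=X$. On $(0,\infty)$, one checks that $\supp(\mu_0)=X\cap(q,1]$: the point $\{1\}$ is captured by $\nu|_{(q,1]}$ when $1\in X$ and never by $\nu(\{q\})\delta_1$ spuriously, because $q$-invariance of $X$ forces $q\in X\iff 1\in X$, which is exactly what the symmetrization in $\mu_0$ is designed to respect. The $q$-invariant extension then has support $\bigcup_{k\in\Z}q^{-k}(X\cap(q,1])=X\cap(0,\infty)$. At the origin, a short case split using closedness of $X$ and the chosen value of $\mu(\{0\})$ gives $0\in\supp(\mu)\iff 0\in X$. With $\mu$ in hand, Theorem~\ref{zf} supplies the $q$-normal operator $\zeta$ on $\LL([0,\infty),\mu)$ acting by \eqref{L2rep}, and because the polar-decomposition partial isometry $u$ of the paragraph preceding Theorem~\ref{zf} is in fact unitary, one has $|\zeta|=\hat x$ and therefore $\spec(|\zeta|)=\spec(\hat x)=\supp(\mu)=X$.

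The only genuinely delicate point is the endpoint bookkeeping at $\{q,1\}$: the naive restriction $\nu|_{(q,1]}$ could omit the point $1$ from its support even when $q\in X$, and the $q$-translate $q$ of $1$ lies outside the fundamental domain $(q,1]$. The symmetrization terms $\nu(\{q\})\delta_1$ and $\nu(\{1\})\delta_q$ built into the construction of $\mu_0$ are exactly what is needed to cure this, and the remainder of the argument is routine.
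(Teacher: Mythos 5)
Your construction is exactly the one the paper uses: the same symmetrized measure $\mu_0$ on $(q,1]$ built from a finite measure $\nu$ with $\supp(\nu)=[q,1]\cap X$, extended $q$-invariantly via \eqref{mu0} and fed into Theorem~\ref{zf}, with $\spec(|\zeta|)=\spec(\hat x)=\supp(\mu)=X$. The proof is correct; you merely spell out a few details (the point-mass construction of $\nu$, the support verification, and $|\zeta|=\hat x$) that the paper delegates to the preceding paragraph and a reference.
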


\section{C*-algebra generated by $q$-normal operators}                      \label{gen}
In this section, we will determine the C*-algebra generated by a 
$q$-normal operator. It turns out that this C*-algebra is closely 
related to crossed product C*-algebras. Since our transformation group 
will always be $\Z$, we use a slightly more direct 
(but equivalent) definition of 
crossed product C*-algebras than the usual one (cf.~\cite{Will}). 

Let $X\subset [0,\infty)$ be a closed non-empty $q$-invariant set.  
Then 
\[  \label{alpha}
 \alpha_q: C_0( X) \longrightarrow  C_0(X), \quad (\alpha_q(f))(x):= f(q x), 
\]
defines an automorphism of $C_0( X)$. Let $U$ be an abstract unitary element. 
Consider the *-algebra 
\[      \label{C0X} 
\alg\{C_0( X), U\}
:= \Big\{  \sum_{\text{finite}} f_k \hs U^k\,:\, f_k\in C_0( X),\ \, k\in\Z\Big\}, 
\]
with multiplication and involution determined by 
\[ \label{multinv}
fU^n g U^m = f\alpha_q^n(g)U^{n+m}, \quad 
(fU^n)^*= \alpha_q^{-n}(\bar f)\hs U^{-n}, \quad f,g \in C_0( X),\ \,n,m\in\Z,
\]
where $\bar f$ denotes the complex conjugate of $f$. 
Note that $U\notin \alg\{C_0( X), U\}$. 

We remark that the use of the unitary $U$ in the definition  of 
$\alg\{C_0( X), U\}$ in Equation \eqref{C0X} is superfluous since 
the way the functions $f_k$ and $g_j$ multiply in the product
$ (\underset{\text{finite}}{\sum} f_k \hs U^k)
(\underset{\text{finite}}{\sum} g_j \hs U^j)$ is known as the 
convolution product. However, later on, we will replace 
$U$ by a unitary operator $u$ and 
$f_k$ by the 
operator $f_k(|\zeta|)$, 
where $|\zeta|$ and $u$ are defined   
by the polar decomposition $\zeta=u\hs |\zeta|$ of an appropriate 
$q$-normal operator $\zeta$, so our notation is more suggestive.

For a Hilbert space $\hH$, we denote by $\rmB(\hH)$ the C*-algebra of 
bounded operators. 
A covariant representation of $\alg\{C_0( X), U\}$ 
is given by a unitary 
operator $V\in \rmB(\hH)$ and 
a *-representation $\pi_0 : C_0( X)\ra  \rmB(\hH)$ such that $V \pi_0(f)= \pi_0(\alpha_q(f)) V$ for all 
$f\in C_0( X)$.  Setting $\pi(U):=V$ and  $\pi(fU^n):= \pi_0(f) \pi(U)^n$, we obtain 
a *-representation $\pi : \alg\{C_0( X), U\}\ra  \rmB(\hH)$. 
Therefore we can view covariant representations as *-re\-pre\-sen\-ta\-tions of 
$\alg\{C_0( X), U\}$ satisfying $\pi(fU^n):= \pi_0(f) \pi(U)^n$ with $\pi(U)$ unitary. 
Now we define the crossed product C*-algebra $C_0( X)\rtimes \Z$ as 
the closure of $\alg\{C_0( X), U\}$ under the norm 
\[ \label{norm}
\| a \|_{\text{univ}} 
:= \sup \{ \|\pi(a)\|\,:\, \pi\ \text{is a covariant representation}\}, \quad 
a\in \alg\{C_0( X), U\}. 
\]  
The existence of the norm defined in \eqref{norm} follows from general considerations, see \cite{Will}. 

Note that $X{\setminus}\{0\}$ remains to be $q$-invariant for any $q$-invariant set $X\subset[0,\infty)$. 
The crossed product C*-algebra $C_0(X{\setminus}\{0\})\rtimes \Z$ is defined similarly 
to the above with $X$ replaced by $X{\setminus}\{0\}$. That is, 
$$
C_0(X\hsp\setminus\hsp\{0\})\rtimes \Z:= \|\cdot\|_{\text{univ}}\text{-}\mathrm{cls}
\Big\{  \sum_{\text{finite}} f_k \hs U^k\,:\, f_k\in C_0( X\hsp\setminus\hsp\{0\}),\ k\in\Z\Big\} , 
$$
where the multiplication and involution are determined by \eqref{multinv}, and the norm is 
given as in \eqref{norm}. 

Suppose that $\zeta=u|\zeta|$ is a $q$-normal operator with $\spec(|\zeta|)=X\neq \{0\}$. 
It follows from Proposition \ref{qnormal} \ref{iv}) that we obtain a covariant representation 
of $\alg\{C_0( X), U\}$ and $\alg\{C_0( X\hsp\setminus\hsp\{0\}), U\}$ 
on $\ker(|\zeta|)^\perp$ 
by setting 
$\pi_0(f)=f(|\zeta|)$ and $\pi(U)=u$. 
The next proposition shows that we can always think of the corresponding crossed product 
C*-algebras as the closure of the image of these covariant representations. 

In the proof of the proposition, we will need the following notation: 
Given a C*-algebra $\A$, we denote by $\M(\A)$ its multiplier C*-algebra (see e.g.\ \cite[Section 1.5]{Will}). 
Let $\hH$ be a Hilbert space. We say that a *-representation 
$\pi: \A\ra \rmB(\hH)$ is non-degenerate, if the set $\pi(\A)\hH$ is dense in $\hH$. 
It is known that  each non-degenerate $\pi$ admits a unique extension 
(denoted by the same symbol)  $\pi: \M(\A)\ra \rmB(\hH)$.

\begin{prop} \label{propX}
Let $X\neq \{ 0\}$ be a 
non-empty $q$-invariant closed subset of $[0,\infty)$. Then 
\begin{align*}
C_0( X)\rtimes \Z&\,\cong\, \B_1:=  \|\hsp\cdot\hsp\|\text{-}\mathrm{cls}
\Big\{  \sum_{\text{{\rm finite}}} f_k(|\zeta|) \hs u^k\,:\, f_k\in C_0( X),\ k\in\Z \Big\}, 
 \\
C_0(X\hsp\setminus\hsp\{0\})\rtimes \Z&\,\cong\, \B_0:=\|\hsp\cdot\hsp\|\text{-}\mathrm{cls}
\Big\{ \! \sum_{\text{{\rm finite}}}\hsp f_k(|\zeta|) \hs u^k : f_k\hsp\in\hsp C_0( X\hsp\setminus\hsp\{0\}),\;
k\hsp\in\hsp\Z\Big\} , 
\end{align*}
where $u$ and $|\zeta|$ are defined by the 
polar decomposition
$\zeta=u|\zeta|$ of a $q$-normal operator $\zeta$ such that $\spec(|\zeta|)= X$ and $\ker(|\zeta|)=\{0\}$. 
\end{prop}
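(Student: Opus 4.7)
The strategy is to use the $q$-normal operator $\zeta$ to build a covariant representation, extend it by universality to a surjective $*$-homomorphism onto $\B_1$ (resp.\ $\B_0$), and then prove injectivity by a gauge-invariant uniqueness argument. Concretely, setting $\pi_0(f):=f(|\zeta|)$ and $\pi(U):=u$, Proposition \ref{qnormal}(\ref{iv}) gives $u\hs f(|\zeta|)\hs u^* = f(q|\zeta|) = (\alpha_q(f))(|\zeta|)$, so this is a covariant representation of $\alg\{C_0(X),U\}$. By the definition of $\|\cdot\|_{\text{univ}}$, it extends to a $*$-homomorphism $\pi:C_0(X)\rtimes\Z \to \rmB(\hH)$, whose image is dense in $\B_1$ by definition and therefore equal to $\B_1$ (the image of a $*$-homomorphism of C*-algebras is closed). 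This handles surjectivity. Moreover $\pi_0$ is faithful on $C_0(X)$ since $\spec(|\zeta|)=X$ implies $\|f(|\zeta|)\|=\sup_{x\in X}|f(x)|=\|f\|_\infty$.

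For injectivity I would implement a gauge action on $\B_1$. Using $\ker(|\zeta|)=\{0\}$ and the decomposition $\hH=\underset{n\in\Z}{\oplus}\hH_n$ from Corollary \ref{corz}, I define, for each $\lambda\in\mathbb{T}$, a unitary $V_\lambda\in\rmB(\hH)$ by $V_\lambda h_n := \lambda^n h_n$. Since $|\zeta|$ preserves each $\hH_n$ (acting there as $q^n A$) and $u:\hH_n\to\hH_{n-1}$, one checks on generators that $V_\lambda$ commutes with $f(|\zeta|)$ and that $V_\lambda\hs u\hs V_\lambda^* = \lambda^{-1} u$. Hence $\mathrm{Ad}(V_\lambda)$ preserves the dense $*$-subalgebra spanning $\B_1$ and extends to a strongly continuous action of $\mathbb{T}$ on $\B_1$ with $\mathrm{Ad}(V_\lambda)\bigl(f(|\zeta|)\hs u^n\bigr) = \lambda^{-n} f(|\zeta|)\hs u^n$.

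Integrating against the normalised Haar measure yields a faithful conditional expectation $\tilde E:\B_1\to \pi_0(C_0(X))$, $\tilde E(b):=\int_\mathbb{T}\mathrm{Ad}(V_\lambda)(b)\,d\lambda$, selecting the ``$k=0$ Fourier coefficient''; in particular $\tilde E(f(|\zeta|)\hs u^n)=\delta_{n,0}f(|\zeta|)$. On the abstract side, the dual gauge action on $C_0(X)\rtimes\Z$ produces the analogous conditional expectation $E(\sum f_k U^k)=f_0$, which is faithful on positive elements (this is the standard step invoking amenability of $\Z$, so that the universal and reduced norms coincide). By construction $\tilde E\circ\pi$ and $\pi\circ E$ agree on $\alg\{C_0(X),U\}$, hence on all of $C_0(X)\rtimes\Z$. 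If $\pi(a)=0$, then $\pi\bigl(E(a^*a)\bigr)=\tilde E\bigl(\pi(a^*a)\bigr)=0$; faithfulness of $\pi_0$ forces $E(a^*a)=0$, and faithfulness of $E$ then gives $a=0$.

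The statement for $C_0(X\hsp\setminus\hsp\{0\})\rtimes\Z\cong\B_0$ follows by repeating the argument verbatim with $C_0(X\setminus\{0\})$ in place of $C_0(X)$, using that $\pi_0$ remains injective on this subalgebra because $\spec(|\zeta|)=X$ and $\ker(|\zeta|)=\{0\}$ imply $0$ is not an eigenvalue, so $f(|\zeta|)=0$ still forces $f\equiv 0$ on $X\setminus\{0\}$. The main obstacle I expect is the verification that $E$ is faithful on the universal crossed product: this is the classical fact ensuring that the full and reduced crossed products of an amenable group coincide, and I would simply cite it from \cite{Will} rather than reproving it. Once this is granted, the whole argument reduces to checking the compatibility $\tilde E\circ\pi=\pi\circ E$, which is immediate on generators.
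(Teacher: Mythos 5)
Your proof is correct, but it identifies the crossed product by a different mechanism than the paper. The paper verifies the hypotheses of Raeburn's theorem on the universal property of $C_0(X)\rtimes\Z$: the only nontrivial point there is that the assignment $\sum_k f_k U^k \mapsto \sum_k f_k(|\zeta|)u^k$ is well defined, i.e.\ that $\sum_{k=M}^N f_k(|\zeta|)u^k=0$ forces every $f_k=0$, which is proved by pairing against vectors in the spectral subspaces $\hH_n$ of Corollary \ref{corz} so that the shift $u^k$ separates the summands. You instead take the covariant representation $\pi_0(f)=f(|\zeta|)$, $\pi(U)=u$, pass to the induced surjection $\pi\colon C_0(X)\rtimes\Z\to\B_1$, and prove injectivity by gauge-invariant uniqueness: the unitaries $V_\lambda h_n=\lambda^n h_n$ implement a circle action with $\mathrm{Ad}(V_\lambda)(f(|\zeta|)u^n)=\lambda^{-n}f(|\zeta|)u^n$, averaging gives a conditional expectation $\tilde E$ compatible with the canonical expectation $E$ on the crossed product, and faithfulness of $E$ (amenability of $\Z$, full equals reduced) together with faithfulness of $\pi_0$ (which uses $\spec(|\zeta|)=X$) yields injectivity. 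Both arguments exploit the same $\Z$-grading $\hH=\oplus_n\hH_n$ — the paper to read off Fourier coefficients directly, you to build the gauge unitaries — and both correctly note that faithfulness of $\pi_0$ survives restriction to $C_0(X\setminus\{0\})$. Your route is the standard textbook identification and is slightly heavier in its reliance on the theory (faithfulness of the canonical expectation on the full crossed product), whereas the paper's appeal to Raeburn's theorem keeps the Hilbert-space computation elementary and self-contained; the conclusions are the same.
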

\begin{proof}
First we remark that a $q$-normal operator with $\spec(|\zeta|)= X$ exists by Corollary \ref{cor}. 
Taking its restriction to $\ker(|\zeta|)^\perp$, we may assume that $\ker(|\zeta|)=\{0\}$.  
For brevity, set $X_0:= X\hsp\setminus\hsp\{0\}$ and $X_1:=X$. 
The proposition will be proven by invoking the universal property of crossed product C*-algebras. 
That is, if we show that 
\begin{enumerate}[(a)]
\item  \label{Ra}
there is covariant representation 
$\rho_i : \alg\{C_0( X_i), U\}\ra  \M(\B_i)$, 
\item  \label{Rb}
given a  covariant representation $\pi_i : \alg\{C_0( X_i), U\}\ra \rmB(\hH)$, 
there is a non-degenerate *-re\-pre\-sen\-ta\-tion $\Pi_i : \B_i\ra \rmB(\hH)$ such that its unique extension to 
 $\M(\B_i)$ satisfies  $\Pi_i\circ  \rho_i = \pi_i$, 
 \item  \label{Rc}
 $\B_i = \|\hsp\cdot\hsp\|\text{-}\mathrm{cls}\hs\{ \rho_i(f)\rho_i(U)^k \,:\, f\in C_0( X_i), \ k\in\Z\}$, 
\end{enumerate}
then $\B_i\cong C_0( X_i)\rtimes \Z$, \,$i=0,1$, by Raeburn's Theorem \cite[Theorem 2.61]{Will}.
Setting $\rho_i(f):=f(|\zeta|)$, \,$f\in C_0( X_i)$, and $\rho_i(U)=u$, the item 
(\ref{Ra}) follows from Proposition~\ref{qnormal}~\ref{iv}) and (c) 
from the definition of $\B_i $. 

To prove (\ref{Rb}), we show that 
$\Pi_i(\underset{\text{finite}}{\sum} f_k(|\zeta|) \hs u^k )
:=\underset{\text{finite}}{\sum}\pi_i(f_k)\hs \pi_i(U)^k$ is well defined.  
For this, it suffices to verify that $\sum_{k=M}^N f_k(|\zeta|) \hs u^k=0$ implies 
$f_k=0$ for all~$k$, where $f_k\in C_0( X_i)$,  $M,N\in\Z$ and  $M\leq N$. 
By unitary equivalence, we may assume that the action of $\zeta=u\hs |\zeta|$ is given by the formulas of 
Corollary \ref{corz}. Recall that $\hH_{n}\perp \hH_m$ for $n\neq m$, \,$f_k(|\zeta|) : \hH_n\ra \hH_n$ 
and  $u^k h_{n}=h_{n-k} \in\hH_{n-k}$. 
Suppose now that $f_M\neq 0$. 
Then there exist $n\in\Z$ and $g_n,\,h_n\in \hH_n$ such that $\ip{g_n}{f_M(|\zeta|)h_n}\neq 0$. 
Hence 
$$
\ip{g_n}{ \sum_{k=M}^N f_k(|\zeta|) \hs u^k \hs h_{n+M}} = \ip{g_n}{ \sum_{k=M}^N f_k(|\zeta|)  \hs h_{n+M-k}} = 
\ip{g_n}{f_M(|\zeta|)h_n}\neq 0. 
$$
Therefore $\sum_{k=M}^N f_k(|\zeta|) \hs u^k=0$ implies $f_M=0$. By induction on $m=M, M\hsp+\hsp 1, \ldots\hs$, 
we conclude that $f_m=0$ for all 
$m=M, \hs M\hsp +\hsp 1, \ldots, N$, so $\Pi_i$ is well defined. The non-degeneracy is 
easily shown by applying $\Pi_i$ to an approximate unit of $C_0(X_i)$. 
\end{proof}


Before stating our main theorem, we recall 
Woronowicz's definition \cite[Definition~3.1]{Wo} of a C*-algebra generated by unbounded elements. 
Let $\hH$ be a separable Hilbert space and let $\A\subset\rmB(\hH)$ be a C*-algebra. We say that a 
densely defined closed operator $T$ acting on $\hH$ is affiliated with $\A$ if its $z$-transform 
\[  \label{zetatrans}
z_T:=T(1+T^*T)^{-1/2}
\]
belongs to the multiplier algebra $\M(\A)$ and if the set $(1-z_T^* z_T)\A$ is dense in $\A$. 
Suppose that $\A$ is non-degenerate, i.e., the set $\A\hH$ is dense in $\hH$. Then, 
given a C*-algebra $\A_0$, the set of morphisms from $\A_0$ into $\A$ is defined as 
\begin{align*}
\mathrm{Mor}(\A_0,\A):=\{ \pi:\A_0\lra \M(\A)\subset \rmB(\hH)\,:\, \ &\pi\ \,\text{is a *-homomorphism}\\ 
&\text{and}\ \,\pi(\A_0)\A\ \,\text{is dense in}\ \A\}. 
\end{align*}
Let $\K$ be a separable Hilbert space and $\pi : \A\ra \rmB(\K)$ a non-degenerate representation. 
Using the fact that $\pi$ admits a unique extension 
$\pi: \M(\A)\ra \rmB(\K)$, we can define the $\pi$-image of an 
operator $T$ affiliated with $\A$ by 
\[   \label{pi-im}
\pi(T) := \pi(z_T)\big(1-\pi(z_T)^*\pi(z_T)\big)^{-1/2} . 
\]
Note that $z_{\pi(T)}= \pi(z_T)$ and that $\pi(T)$ is uniquely determined by $ \pi(z_T)$. 
Now, given a C*-algebra $\A$ and a finite set of elements $T_1,\ldots,T_N$ affiliated with $\A$, it is said 
that $T_1,\ldots,T_N$ generate $\A$ if for any non-degenerate representation $\pi : \A\ra \rmB(\K)$ 
and any non-degenerate C*-algebra $\B\subset\rmB(\K)$, one has 
\[ \label{Wodef}
\pi(T_1),\ldots ,\pi(T_N)\ \,\text{are affiliated with}\ \,\B \qquad 
\Longrightarrow     
\qquad \pi\in\mathrm{Mor}(\A,\B). 
\]
It follows immediately from \cite[Proposition 3.2]{Wo} 
that a non-degenerate C*-algebra $\A$ generated 
by $T_1,\ldots,T_N$ is unique; see also the comments 
before ibid. Theorem 4.2  for the more general case of a 
C*-algebra $\A$ generated by a quantum family of affiliated elements.

Our main goal, achieved in the next theorem, is to give an explicit description 
of the C*-algebra generated by a non-trivial $q$-normal operator.  
Since $q$-normal operators act on Hilbert spaces, it would be natural 
to search for a subalgebra of bounded operators. 
However, the use of crossed product algebras and 
the last proposition allow us to describe the generated C*-algebra 
without reference to a Hilbert space.

\begin{thm} \label{thm}
Let $\zeta$ be a $q$-normal operator on a separable Hilbert space $\hH$ 
such that $X:=\spec(|\zeta|)\neq \{0\}$. 
Then the unique non-degenerate C*-algebra in $\rmB(\hH)$ generated by~$\zeta$
is isomorphic to 
\[ \label{Czz}
C^*_0(\zeta,\zeta^*):=\|\cdot\|\text{-}\mathrm{cls}\hs \Big\{ 
\sum_{\text{{\rm finite}}} f_k \hs U^k\in C_0( X)\rtimes \Z \;:\; k\in\Z,\,\  f_k(0)=0\ \, 
\text{for all}\ \,k\neq 0 
\Big\}, 
\]
where the norm closure is taken in the C*-algebra $C_0( X)\rtimes \Z$.   
\end{thm}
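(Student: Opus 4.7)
The plan is to realize $C^*_0(\zeta,\zeta^*)$ concretely as a C*-subalgebra $\A$ of $\rmB(\hH)$ via the isomorphism $C_0(X)\rtimes\Z\cong\B_1$ of Proposition \ref{propX}, and then to verify the two conditions of Woronowicz's definition. Since a non-degenerate C*-algebra generated by $\zeta$ is unique by \cite[Proposition~3.2]{Wo}, it suffices to show (a) that $\A$ is a non-degenerate C*-subalgebra of $\rmB(\hH)$ with which $\zeta$ is affiliated, and (b) that $\A$ satisfies the universal generation property \eqref{Wodef}.

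I would first check that the set in \eqref{Czz} is a $*$-subalgebra. The involution $(fU^n)^*=\alpha_q^{-n}(\bar f)\hs U^{-n}$ preserves the constraint $f_k(0)=0$ for $k\neq 0$, since $\alpha_q^{-n}(\bar f)(0)=\overline{f(0)}$. Computing $(\sum_k f_kU^k)(\sum_l g_lU^l)=\sum_n h_nU^n$ with $h_n=\sum_k f_k\hs\alpha_q^k(g_{n-k})$ gives $h_n(0)=\sum_k f_k(0)\hs g_{n-k}(0)$, and for $n\neq 0$ each summand vanishes because either $k\neq 0$ or $n-k\neq 0$. Non-degeneracy of $\A$ on $\hH$ follows from the spectral theorem, since the zero-degree part $\{h(|\zeta|):h\in C_0(X)\}\subset\A$ already acts non-degenerately. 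To show $\zeta$ is affiliated with $\A$, one computes $z_\zeta=u\hs g(|\zeta|)$ where $g(x):=x/\sqrt{1+x^2}\in C_b([0,\infty))$ with $g(0)=0$. Using the relation $u\hs f(|\zeta|)=f(q|\zeta|)\hs u$ from Proposition \ref{qnormal}\ref{iv}), one obtains $z_\zeta\cdot h(|\zeta|)\hs u^n=(gh)(q|\zeta|)\hs u^{n+1}$, whose coefficient lies in $C_0(X)$ and vanishes at $0$ thanks to $g(0)=0$; together with the analogous computation for right multiplication, this shows $z_\zeta\in\M(\A)$. Since $1-z_\zeta^*z_\zeta=(1+|\zeta|^2)^{-1}$ is strictly positive, a compact cutoff argument shows that $(1-z_\zeta^*z_\zeta)\A$ is dense in $\A$.

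To verify \eqref{Wodef}, let $\pi:\A\to\rmB(\K)$ be a non-degenerate representation and $\B\subset\rmB(\K)$ a non-degenerate C*-subalgebra such that $\pi(\zeta)$ is affiliated with $\B$, so that $\pi(z_\zeta)\in\M(\B)$. The polar decomposition of $\pi(z_\zeta)$ produces a partial isometry $v$ and the positive part $g(|\pi(\zeta)|)\in\M(\B)$. Because $f\circ g^{-1}$ extends continuously to $[0,1]$ with value $0$ at both endpoints for each $f\in C_0(X)$, continuous functional calculus yields $f(|\pi(\zeta)|)\in\B$ for all $f\in C_0(X)$. Writing $v\hs f(|\pi(\zeta)|)=\pi(z_\zeta)\cdot(f/g)(|\pi(\zeta)|)$ whenever $f/g\in C_0(X)$, combined with the intertwining $v\hs f(|\pi(\zeta)|)=f(q|\pi(\zeta)|)\hs v$ and a density argument, gives $h(|\pi(\zeta)|)\hs v^n\in\B$ for all $n\in\Z$ and $h\in C_0(X)$ with $h(0)=0$. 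Consequently, $\pi(\A)\subset\M(\B)$, and the density of $\pi(\A)\B$ in $\B$ follows because $\{f(|\pi(\zeta)|):f\in C_0(X)\}$ already supplies a bounded approximate unit in $\B$ by the affiliation of $|\pi(\zeta)|$.

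I expect the main obstacle to be in the last step, namely showing that the $v^n$-terms actually lie in $\M(\B)$ rather than only in $\rmB(\K)$: the polar decomposition of a multiplier whose positive part is not invertible does not in general keep its partial isometry inside the multiplier algebra. The defining condition $f_k(0)=0$ for $k\neq 0$ in \eqref{Czz} is precisely the correct regularity restriction that makes each product $f_k(|\pi(\zeta)|)\hs v^k$ factor through the multiplier $\pi(z_\zeta)$, thereby avoiding the need for $v$ itself to belong to $\M(\B)$.
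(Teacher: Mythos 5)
Your strategy is genuinely different from the paper's: you verify Woronowicz's definition \eqref{Wodef} directly (given $\pi$ and $\B$ with $\pi(\zeta)$ affiliated to $\B$, show $\pi\in\mathrm{Mor}(\A,\B)$), whereas the paper invokes the sufficient criterion of \cite[Theorem 3.3]{Wo}, whose hard condition is that $\zeta$ \emph{separates the representations} of $\A$ (plus affiliation and $(1+\zeta^*\zeta)^{-1}\in\A$). Both routes ultimately rest on the same computational core, which you correctly isolate as the crux: because $f_k(0)=0$ for $k\neq0$, one can factor $f_k(|\zeta|)\hs u^k$ as $h(|\zeta|)\hs z_\zeta^{k}$ (resp.\ $h(|\zeta|)\hs z_\zeta^{*\hs|k|}$) for a dense set of such $f_k$, and pass to limits; the paper's version of this is \eqref{ptrick}--\eqref{ptrick-} with rapid-decay cutoffs $\varphi_n$ and weak limits, yours is a sup-norm density argument with functions vanishing to high order at $0$. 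Your route buys a self-contained verification of the definition; the paper's buys a shorter argument by outsourcing the universal quantification over $(\pi,\B)$ to \cite{Wo}.

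Three points need repair. First, the claim that $f(|\pi(\zeta)|)\in\B$ for \emph{all} $f\in C_0(X)$ is false in general: take $\B=C_0((0,\infty))$ acting by multiplication on $\LL((0,\infty))$ and $\pi(\zeta)=\hat x$; then $\hat x$ is affiliated with $\B$, but $f(\hat x)\notin\B$ when $f(0)\neq0$. Relatedly, $f\circ g^{-1}$ does not vanish at $z=0$ unless $f(0)=0$. Fortunately only $f(|\pi(\zeta)|)\in\M(\B)$ is needed for the morphism property, and that does hold (functional calculus of $|\pi(z_\zeta)|\in\M(\B)$ in the unital algebra $\M(\B)$), so this is a misstatement rather than a dead end. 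Second, you introduce the partial isometry $v$ from the polar decomposition of $\pi(z_\zeta)$ and assert the intertwining $v\hs f(|\pi(\zeta)|)=f(q|\pi(\zeta)|)\hs v$ and the identification $\pi(f(|\zeta|)u^k)=f(|\pi(\zeta)|)v^k$; neither is justified (one would first have to show $\pi(\zeta)$ is again $q$-normal with angular part $v$). This detour is avoidable: perform the factorization $f(|\zeta|)u^k=h(|\zeta|)\hs z_\zeta^k$ already in $\M(\A)$, apply the extended $\pi$, and conclude membership in $\M(\B)$ as a product $\pi(h(|\zeta|))\hs\pi(z_\zeta)^k$ of multipliers -- exactly the mechanism the paper uses, and consistent with your own closing remark that $v$ itself never needs to lie in $\M(\B)$. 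Third, realizing $C^*_0(\zeta,\zeta^*)$ as a \emph{non-degenerate} subalgebra of $\rmB(\hH)$ when $\ker\zeta\neq\{0\}$ is not immediate from Proposition \ref{propX}, which lives on $\ker(|\zeta|)^\perp$: one must adjoin the summand $f_0(|\zeta|)\upharpoonright_{\ker(|\zeta|)}$ and check that this is still isometric, which uses the estimate $|f_0(0)|\leq\big\|\sum_k f_k(|\zeta|)\hs u^k\big\|$ as in \eqref{isopsi}. With these repairs your argument goes through.
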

\begin{proof}
The first step of the proof consists in 
identifying the abstractly defined C*-algebra $C^*_0(\zeta,\zeta^*)$ 
with a non-degenerate C*-subalgebra of $\A\subset\rmB(\hH)$.  
Let $\zeta=u\hs |\zeta|$ be the polar decomposition of $\zeta$.   
Up to unitary equivalence, we may assume that $\hH$ and  $\zeta$ are given by the formulas 
of Corollary~\ref{corz}. 
Then $\ker(\zeta)=\ker(|\zeta|)$ and  
$u \!\!\upharpoonright_{\ker(|\zeta|)^\perp}$ is a unitary operator. Furthermore, 
by Proposition~\ref{propX},
we have  $C^*_0(\zeta,\zeta^*)\cong \B\subset \rmB(\ker(|\zeta|)^\perp )$,    
where 
$$
\B := \|\hsp\cdot\hsp\|\text{-}\mathrm{cls}\hs
\Big\{  \sum_{\text{finite}} f_k(|\zeta|) \hs u^k{ \upharpoonright}_{\ker(|\zeta|)^\perp}  \,:\,   f_k\in C_0(\spec(|\zeta|)),\ \, f_k(0)=0\ \, 
\text{for all}\ \,k\neq 0   \Big\}. 
$$
On $\hH =  \ker(|\zeta|) \oplus \ker(|\zeta|)^\perp$, set 
\begin{align} \nonumber
\A := \|\cdot\|\text{-}\mathrm{cls} \hs\Big\{  
f_0(|\zeta|){ \upharpoonright}_{\ker(|\zeta|)} \oplus \sum_{\text{finite}} f_k(|\zeta|) \hs u^k{ \upharpoonright}_{\ker(|\zeta|)^\perp}  \,:\,\ 
 &f_k\in C_0(\spec(|\zeta|)) \ \,\text{and} \\[-8pt]
&f_k(0)=0\ \, \text{for all}\ \, k\neq 0   \Big\}.  \label{A}
\end{align}
Recall from Corollary \ref{corz} that $\hH =  \ker(|\zeta|) \oplus \underset{n\in\Z}{\oplus}\hH_n$, where $\hH_n\cong \hH_0$ 
is the image of the spectral projection of $|\zeta|$ corresponding to the Borel set $(q^{n+1},q^n]\cap \spec(|\zeta|)$. 
Since 
\begin{align*}
\| f_0(|\zeta|){ \upharpoonright}_{\ker(|\zeta|)}    \|=   |f_0(0)| &\,\leq\, \sup\{ |f_0(x)|\,:\, x\in \spec(|\zeta|){\setminus}\{0\}\}\\
             &\,=\, \sup\{ \|f_0(|\zeta|)h_n\|\,:\, h_n\in \hH_n,\ \,\|h_n\|=1,\ \,n\in\Z\} \\
             &\,\leq\,  \Big\|\sum_{\text{finite}} f_k(|\zeta|) \hs u^k{ \upharpoonright}_{\ker(|\zeta|)^\perp}  \Big\|  , 
\end{align*}
one easily checks  that $\Psi :\A\ra\B$, given by 
\[  \label{isopsi}
\Psi\Big(f_0(|\zeta|){ \upharpoonright}_{\ker(|\zeta|)} \oplus 
\sum_{\text{finite}} f_k(|\zeta|) \hs u^k{ \upharpoonright}_{\ker(|\zeta|)^\perp}\Big)= 
 \sum_{\text{finite}} f_k(|\zeta|) \hs u^k{ \upharpoonright}_{\ker(|\zeta|)^\perp},
 \]
 defines an isometric *-isomorphism. 
 Thus $C^*_0(\zeta,\zeta^*)\cong \A$. The non-degeneracy of $\A$ follows from the fact that 
 for each $m\in\N$, there exists a $\varphi_m\in C_0(\spec(|\zeta|))$ satisfying $\varphi_m(t)=1$ for all 
 $t\in [0,q^{-m})$ so that $\varphi_m(|\zeta|)\in\A$ acts as the identity on  
 $\ker(|\zeta|) \oplus \overset{\infty}{\underset{n=-m}{\oplus}}\hH_n$. 

The theorem will now be proven by applying \cite[Theorem 3.3]{Wo} 
to $\A\subset \rmB(\hH)$, 
so it suffices show that 
\begin{enumerate}[(a)]
\item  \label{Ta} 
$\zeta$ is affiliated with $\A$, 
\item  \label{Tb} 
$\zeta$ separates the representations of $\A$, 
\item  \label{Tc} $(1+\zeta^*\zeta)^{-1}\in\A$. 
\end{enumerate}
Observe that (\ref{Tc}) holds trivially since the function $f_0$, defined by $f_0(t):= (1+t^2)^{-1}$, 
belongs to $C_0(\spec(|\zeta|))$ and 
$(1+\zeta^*\zeta)^{-1}=f_0(|\zeta|) = f_0(|\zeta|){ \upharpoonright}_{\ker(|\zeta|)}\oplus f_0(|\zeta|){ \upharpoonright}_{\ker(|\zeta|)^\perp}$. 

Next we verify (\ref{Ta}). 
By the definition of the affiliation relation,   
this means that 
\[
z_\zeta:= \zeta(1+\zeta^*\zeta)^{-1/2} \in\M(\A),
\]
and that 
\[ \label{clsA}
\|\hsp\cdot\hsp\|\text{-}\mathrm{cls}\big\{ (1-z_\zeta^*\hs z_\zeta)a\,:\, a\in\A\big\} = \A. 
\]
By the formulas in Corollary \ref{corz},  
$z_\zeta = 0 \oplus  (u |\zeta| (1+|\zeta|^2)^{-1/2}){ \upharpoonright}_{\ker(|\zeta|)^\perp}$. 
On $\ker(|\zeta|)^\perp$, we get from Proposition \ref{qnormal} \ref{iv}) 
$u |\zeta| (1+|\zeta|^2)^{-1/2} f_k(|\zeta|) u^k=  q|\zeta| (1+q^2|\zeta|^2)^{-1/2} f_k(q|\zeta|) u^{k+1}$. 
Since the function $\tilde f_k$ given by $\tilde f_k(t):= \frac{qt}{ \sqrt{1+q^2t^2}} f_k(qt)$ belongs to $C_0(\spec(|\zeta|))$ 
for any $f_k\in C_0(\spec(|\zeta|))$ and satisfies $\tilde f_k(0)=0$, we see that multiplying from the left with $z_\zeta$ maps the 
defining set of $\A$ on the right-hand side of \eqref{A} into itself. Taking the closure yields $z_\zeta\in\M(\A)$. 

To show \eqref{clsA}, note that $1-z_\zeta^*\hs z_\zeta = (1+|\zeta|^2)^{-1}$. 
Let $\{\varphi_n\}_{ n\in\N}$ be an approximate unit for $C_0(\spec(|\zeta|))$ such that 
each $\varphi_n$ has compact support. Set $\phi_n(t):= (1+t^2)\varphi_n(t)$. 
Then $\phi_n\in C_0(\spec(|\zeta|))$ and 
$\underset{n\to\infty}{\lim}(1-z_\zeta^*\hs z_\zeta)\hs \phi_n(|\zeta|)\hs f(|\zeta|)=\underset{n\to\infty}{\lim}\varphi_n(|\zeta|) f(|\zeta|)=f(|\zeta|)$ 
for all $f\in C_0(\spec(|\zeta|))$. From this, one easily concludes that the closure on the left-hand side of \eqref{clsA} 
contains the defining set of $\A$ on the right-hand side of \eqref{A} which proves~\eqref{clsA}. 

We turn now to the proof of (\ref{Tb}). Since $\A$ and $\B$ are  isomorphic, it suffices to consider representations of $\B$. 
Set $\tilde \zeta:= \zeta { \upharpoonright}_{\ker(|\zeta|)^\perp} $. 
Note that $\spec(|\tilde \zeta| )=\spec(|\zeta|)$ by the $q$-invariance of 
 $\spec(|\zeta|)\neq \{0\}$. 
Therefore $\B$ is generated by 
$\underset{\text{finite}}{\sum} f_k(|\tilde \zeta|) \hs u^k$ with $f_k\in C_0(\spec(|\zeta|))$ and  $f_k(0)=0$ 
for all $k\neq 0$.  
The same arguments as above show that 
\[ \label{tz}
z_{\tilde \zeta} :=  {\tilde \zeta}(1+{\tilde \zeta}^*{\tilde \zeta})^{-1/2} \in\M(\B).
\]
Let $\K$ be a Hilbert space and let $\pi: \B\ra \rmB(\K)$ be a non-degenerate *-representation. 
As mentioned before, $\pi$ 
admits a unique extension $\pi: \M(\B)\ra \rmB(\K)$. 
By \eqref{pi-im} and the isomorphism \eqref{isopsi}, the $\pi$-image of $\zeta$ is 
$\pi(\zeta)= \pi(z_{\tilde \zeta}) \big(1- \pi(z_{\tilde \zeta})^*\pi(z_{\tilde \zeta})\big)^{-1/2}$. 
On the other hand, plugging $\pi(\zeta)$ into \eqref{zetatrans} yields
$z_{\pi(\zeta)}= \pi(z_{\tilde \zeta})$, 
so $\pi(\zeta)$ is uniquely determined by $ \pi(z_{\tilde \zeta})=z_{\pi(\tz)}$ 
(see the comment below Equation \eqref{pi-im}). 

Suppose that we are given  two representations $\pi_i: \B\ra \rmB(\K)$, $i=1,2$. Then the statement 
``$\zeta$ separates the representations of $\B$\hs'' means $\pi_1(\zeta)\neq \pi_2(\zeta)$ whenever $\pi_1\neq\pi_2$. 
By the previous discussion, this is equivalent to $\pi_1(z_{\tilde \zeta})=\pi_2(z_{\tilde \zeta})$
implies $\pi_1=\pi_2$.
 
 Our first aim is to show that 
 $|\pi_1(z_{\tilde \zeta})|=|\pi_2(z_{\tilde \zeta})|$ 
entails $\pi_1(f(|\tilde \zeta|))=\pi_2(f(|\tilde \zeta|))$ 
 for an appropriate class of continuous functions $f$ on $\spec(|\zeta|)$. 
 To begin with, observe that  
 $\pi(z_{\tilde \zeta})^*  \pi(z_{\tilde \zeta}) = \pi(z_{\tilde \zeta}^*z_{\tilde \zeta} ) 
 = \pi(|z_{\tilde \zeta}|^2)=  \pi(|z_{\tilde \zeta}|)^2$ gives 
 $ |\pi(z_{\tilde \zeta})|=\pi(|z_{\tilde \zeta}|)$. Here the fact that $z_{\tilde \zeta}\in \M(\B)$ 
 implies $z_{\tilde \zeta}^*, \;|z_{\tilde \zeta}|\in \M(\B)$ is used. 
 Next,  we invoke the 
 Gelfand representation to obtain an isometric embedding 
 \[   \label{iota}
 \iota : C_0(\spec(|\zeta|)) \lhra \B, \quad \iota(f):= f(|\tilde \zeta|). 
 \]
 Combining it with $\pi:\M(\B)\ra \rmB(\K)$ yields a representation  $\pi:  \iota(C_0(\spec(|\zeta|)))\ra \rmB(\K)$. 
 From \eqref{tz}, we see that $|z_{\tilde \zeta}| = |\tilde \zeta| (1+|{\tilde \zeta}|^2)^{-1/2}$. 
 The function $z(t):= t (1+t^2)^{-1/2}$ separates the points of the one-point compactification 
 $\spec(|\zeta|)\cup\{\infty\}$. By the Stone--Weiertrass Theorem, the functions $1$ and $z$ 
 generate $C(\spec(|\zeta|)\cup\{\infty\})$. Viewing $C_0(\spec(|\zeta|))$ as a subalgebra of 
 $C(\spec(|\zeta|)\cup\{\infty\})$ and extending $\pi\circ \iota$ to the multiplier algebra $\M(C_0(\spec(|\zeta|)))$, 
 it follows that $\pi\circ \iota: C_0(\spec(|\zeta|))\ra \rmB(\K)$ 
 is uniquely determined by its value  on $z \in \M(C_0(\spec(|\zeta|)))$, and so is its 
 extension to  $\M(C_0(\spec(|\zeta|)))$. Finally, it follows from \eqref{iota}, that 
 $\pi(\iota(z))=\pi( |\tilde \zeta| (1+|{\tilde \zeta}|^2)^{-1/2})=\pi(|z_{\tilde \zeta}|)= |\pi(z_{\tilde \zeta})|$. 
 Therefore, given  two representations $\pi_i: \B\ra \rmB(\K)$, $i=1,2$, we have 
 $|\pi_1(z_{\tilde \zeta})|=|\pi_2(z_{\tilde \zeta})|$ if and only if 
 $\pi_1 \circ \iota = \pi_2\circ \iota$, and the same is true for their 
 extensions to $\M(C_0(\spec(|\zeta|)))$. 
 
It still remains to show that $\pi_1(z_{\tilde \zeta}) = \pi_2(z_{\tilde \zeta})$ 
 implies $\pi_1(f(\tz)u^k) = \pi_2(f(\tz)u^k)$ for all $k\in\Z{\setminus}\{0\}$ and 
 $f\in C_0([0,\infty))$ with $f(0)=0$.  
 So consider again a representation $\pi$ lifted to the multiplier C*-algebra $\pi:\M(\B)\ra \rmB(\K)$, and  
 write $\pi(z_\tz)$ in its polar decomposition $\pi(z_\tz)= v|\pi(z_\tz)|$. If $u$ belonged to $\M(\B)$, 
 then it would suffice to show that $\pi(u)=v$, since then 
 $\pi(f(\tz)u^k) = \pi(f(\tz))\hs v^k$ would be uniquely determined by $v$ and 
 the arguments from the last paragraph. Unfortunately, $u\notin \M(\B)$ 
 since  $f(|\tz|)\in \B$ for  $f\in C_0([0,\infty))$ with $f(0)\neq 0$, but 
 $f(|\tz|)\hs u \notin \B$. For this reason, our proof is more complex, 
 involving the spectral theorem of self-adjoint operators. 
 
Let $F$ denote the unique  projection-valued measure on the
Borel $\sigma$-algebra $\Sigma([0,1])$ such that 
$|\pi(z_\tz)|=\int z \hs \dd F(z)$. 
Using $|\pi(z_\tz)|=\pi(|z_\tz|)$ 
from the paragraph below Equation \eqref{iota}, we get 
$|\pi(z_\tz)|=\pi(|\tilde \zeta (1+{\tilde \zeta}^*{\tilde \zeta})^{-1/2}|)=\pi(|\tilde \zeta| (1+|{\tilde \zeta}|^2)^{-1/2})= \pi(z_{|\tz|})$. 
By the definition of the $\pi$-image of operators affiliated  
to $\B$, we have 
$\pi(|\tz|)= \pi(z_{|\tz|}) (1- \pi(z_{|\tz|})^2)^{-1/2}$. 
Therefore we can write  $\pi(|\tz|) = \int z(1- z^2)^{-1/2}  \dd F(z)$. 
The function $z : [0,\infty]\ra [0,1]$, \,$z(\tau)=\frac{\tau}{\sqrt{1+\tau^2}}$ 
is a homeomorphism with inverse  
$\tau: [0,1]\ra [0,\infty]$, \,$\tau(z)=\frac{z}{\sqrt{1-z^2}}$. 
Let $E$ denote projection-valued measure on $\Sigma([0,\infty])$ given by the 
pull-back of $F$ under $z$, i.e., $E(M):=F(z(M))$ for all 
$M\in \Sigma([0,\infty])$. 
Then 
\[ \label{int}
\pi(|\tz|) = \int \tau\hs  \dd E(\tau)\quad \text{and}\quad
\pi(z_{|\tz|})   =    \int z(\tau) \hs \dd E(\tau)
=\int  \frac{\tau}{\sqrt{1+\tau^2}}\hs \dd E(\tau).  
 \]
 Observe also that $E(\{\infty\})=0$ since $F(\{1\})=0$.

We claim that 
$\pi(f(|\tz|))= \int f(\tau)\hs  \dd E(\tau)$ 
for all $f\in C([0,\infty])$. 
Note that $\pi(f(|\tz|))$ is well defined for such an $f$ since then $f(|\tz|)\in\M(\B)$. 
As explained above, the function $z \in C([0,\infty])$ separates the points 
of $[0,\infty]$ so that for each $f\in C([0,\infty])$ there exists a sequence 
of polynomials $\{p_n\}_{n\in\N}$ such that $\underset{n\ra\infty}{\lim} p_n(z)=f$ 
in the supremum norm. Using uniform convergence, $z_{|\tz|}=z(|\tz|)$  
and the second equation in \eqref{int}, we get 
$$
\pi(f(|\tz|))=\hsp \underset{n\ra\infty}{\lim} \pi(p_n(z(|\tz|) ))= \hsp
\underset{n\ra\infty}{\lim} p_n(\pi(z_{|\tz|} ))=\hsp
\underset{n\ra\infty}{\lim} \hsp \int \! p_n(z(\tau))  \dd E(\tau)
=\! \int \! f(\tau)  \dd E(\tau), 
$$
which proves our claim.  Combining it with  \eqref{int} gives  $\pi(f(|\tz|)) = f(\pi(|\tz|))$. 
 
 Now let $\{\varphi_n\}_{n\in\N}\subset  C_0((0,\infty))$ 
 be a sequence of functions of rapid decay, i.e., 
 \[ \label{varphin}
 \sup\{ \, t^k |\varphi_n(t)|\, :\, t\in (0,\infty)\,\} < \infty \ \  \text{for all}\ \ k\in\Z, 
 \]
 such that 
 $ 0\leq \varphi_1\leq \varphi_2\leq \ldots \leq 1$  \,and \,$\underset{n\ra\infty}{\lim}\hs \varphi_n(t) =1$ 
 for all $t\in (0,\infty)$. 
 With the obvious extension to continuous function on $[0,\infty]$, it follows that 
 $$
 \underset{n\ra\infty}{\lim} \pi(\varphi_n(|\tz|)) = \underset{n\ra\infty}{\lim} \varphi_n(\pi(|\tz|)) = E((0,\infty)) 
 $$ 
 in the operator weak topology (even in the  operator strong topology). 
 
 Let us recall that $\tz = u |\tz|$ defines a $q$-normal operator on $\ker(|\zeta|)^\perp$ so that, 
 by Proposition \ref{qnormal}, 
 $u f(|\tz|) u^* = f(q |\tz|)$ for every Borel function $f$ on $[0,\infty]$. 
 Furthermore, 
 $$
 z_{\tilde \zeta} =  {\tilde \zeta}(1+{\tilde \zeta}^*{\tilde \zeta})^{-1/2}= u |\tz|(1+|{\tilde \zeta}|^2)^{-1/2} = 
 u\hs z(|\tz|) = z(q|\tz|) \hs u\hs .
 $$ 
 As a consequence, for all $f\in C_0([0,\infty))$ and $k\in\N$, 
 \begin{align} \nonumber 
 \varphi_n(|\tz|) f(|\tz|) u^k &= 
 \varphi_n(|\tz|)\Big(  \prod_{j=1}^k z(q^j |\tz|)^{-1}  \Big) f(|\tz|) \big(u\, z(|\tz|) \big)^k  \\
 &=  \varphi_n(|\tz|)\Big(  \prod_{j=1}^k z(q^j |\tz|)^{-1}  \Big) f(|\tz|)\hs z_{\tilde \zeta}^k\,,   \label{ptrick}\\
 \nonumber 
 \varphi_n(|\tz|) f(|\tz|) u^{-k} &= 
 \varphi_n(|\tz|)\Big(  \prod_{j=0}^{k-1}z(q^{-j} |\tz|)^{-1}  \Big) f(|\tz|) \big(z(|\tz|)\hs u^* \big)^k  \\
 &=  \varphi_n(|\tz|)\Big(  \prod_{j=0}^{k-1}z(q^{-j} |\tz|)^{-1}  \Big) f(|\tz|)\hs  z_{\tilde \zeta}^{*k}.\label{ptrick-}
 \end{align}
  From \eqref{varphin}, it follows that the function 
 $(0,t)\ni t\mapsto \varphi_n(t)\Big(\overset{k}{\underset{j=1}{\prod}}z(q^j t)^{-1}  \Big)$
 belongs to $C_0((0,\infty))$ and can therefore also be considered as an element of $C_0([0,\infty))$. 
 The same holds for the function 
 $(0,t)\ni t\mapsto \varphi_n(t)\Big(\overset{k-1}{\underset{j=0}{\prod}}z(q^{-j} t)^{-1}  \Big)$.
 
 To finish the proof of (\ref{Tb}), assume that $\pi_i: \B\ra \rmB(\K)$, \,$i=1,2$, are two representations satisfying 
 $\pi_1(z_{\tilde \zeta}) = \pi_2(z_{\tilde \zeta})$. 
 In particular, we have $|\pi_1(z_{\tilde \zeta})| = |\pi_2(z_{\tilde \zeta})|$.
 It has already be shown that then $\pi_1(f(|\tilde \zeta|))=\pi_2(f(|\tilde \zeta|))$ for all 
 $f\in C([0,\infty])$. This also implies $\pi_1(|\tz|)=\pi_2(|\tz|)$ by the definition of the $\pi$-image of $|\tz|$. 
 As in \eqref{int}, write  $\pi_i(|\tz|) = \int \tau\hs  \dd E(\tau)$. 
 Using $\pi_i(f(|\tilde \zeta|))=f(\pi_i(|\tilde \zeta|))$, it follows that 
 $E(\{0\}) \pi_i(f(|\tilde \zeta|)) = f(0) E(\{0\})$. As a consequence, 
 $\pi_i(f(|\tilde \zeta|)) = E((0,\infty))\hs\pi_i(f(|\tilde \zeta|))$ for all $f\in C_0([0,\infty))$ with $f(0)=0$. 
 Given such an $f$, we compute for all $k\in\N$  by taking operator weak limits 
 \begin{align}\nonumber
 \pi_1(f(|\tilde \zeta|) u^k)
 &= E((0,\infty))\hs\pi_1(f(|\tilde \zeta|) u^k) 
 =\underset{n\ra\infty}{\lim} \pi_1(\varphi_n(|\tz|))\hs\pi_1(f(|\tilde \zeta|) u^k) \nonumber\\
 &=\underset{n\ra\infty}{\lim} \pi_1\bigg(
 \varphi_n(|\tz|)\Big(  \prod_{j=1}^k z(q^j |\tz|)^{-1}  \Big) f(|\tz|) \hs z_{\tilde \zeta}^k \bigg) \nonumber\\
 &=\underset{n\ra\infty}{\lim} \pi_1\bigg(
 \varphi_n(|\tz|)\Big(  \prod_{j=1}^k z(q^j |\tz|)^{-1}  \Big) f(|\tz|)\bigg) \pi_1( z_{\tilde \zeta})^k \nonumber\\
 &=\underset{n\ra\infty}{\lim} \pi_2\bigg(
 \varphi_n(|\tz|)\Big(  \prod_{j=1}^k z(q^j |\tz|)^{-1}  \Big) f(|\tz|)\bigg) \pi_2( z_{\tilde \zeta})^k \nonumber\\
 &=\pi_2(f(|\tilde \zeta|) u^k). \label{uk}
 \end{align}
 Here, we applied \eqref{ptrick} in the passage from the first to the second line, and used  
 the property that $\pi$ defines a representation of $\M(\B)$ in the next line. 
 The crucial step from the third to the fourth line 
 follows from $\pi_1(z_{\tilde \zeta})= \pi_2(z_{\tilde \zeta})$ by invoking the assumption and from 
 the previously proven fact that 
 $\pi_1(g(|\tilde \zeta|))=\pi_2(g(|\tilde \zeta|))$ for all $g\in C([0,\infty])$. 
 The last equality is obtained by repeating all steps performed for $\pi_1$  backwards. 
 
 The same arguments with \eqref{ptrick} replaced by \eqref{ptrick-} show that \eqref{uk} 
 holds also for $k\in\Z$, \,$k<0$. 
To sum up, we have shown for all $k\in\Z$ and all 
 $f_k \in C_0([0,\infty))$ satisfying $f_k(0)=0$ if $k\neq 0$
that $\pi_1(z_{\tilde \zeta}) = \pi_2(z_{\tilde \zeta})$ implies 
 $\pi_1(f_k(|\tilde \zeta|) u^k) = \pi_2(f_k(|\tilde \zeta|) u^k)$ . 
 Since these elements generate $\B$, we finally conclude that $\pi_1=\pi_2$. 
 \end{proof} 

We remark that if we had considered the C*-algebra generated by 
$|\zeta|$ and $u$, where $\zeta = u |\zeta|$, then the generated C*-algebra would 
be $C_0( X)\rtimes \Z$ by the simple argument given in the second paragraph 
following Equation \eqref{iota}. 
A more general construction with $|\zeta|$ having discrete spectrum 
can be found in \cite{PS}.

Assume now that $Z$ is a $q$-normal operator such that 
$\spec(Z)=[0,\infty)$. Then 
$$
C_0^*(Z,Z^*):=\|\cdot\|\text{-}\mathrm{cls}\Big\{ 
\sum_{\text{{\rm finite}}} f_k \hs U^k\in C_0( [0,\infty))\rtimes \Z \;:\; k\in\Z,\ \,f_k(0)=0\ \, 
\text{for all}\ \,k\neq 0 
\Big\} 
$$ 
can be viewed as 
an universal object of (the category of) C*-algebras generated by 
$q$-normal operators since 
$$
C^*_0(Z,Z^*) \ni \sum_{\text{{\rm finite}}} f_k \hs U^k \;
\longmapsto\; \sum_{\text{{\rm finite}}} f_k \!\!\upharpoonright_{X} U^k  \in C^*_0(\zeta,\zeta^*) 
$$
yields a well-defined *-homorphism for all 
C*-algebras $C^*_0(\zeta,\zeta^*)$ from \eqref{Czz}, see the 
proof of 
Proposition \ref{propX}. 

For a geometric interpretation of $C^*_0(Z,Z^*)$, observe that 
the definitions of the crossed product C*-algebra 
$C_0( [0,\infty))\rtimes \Z$  and of $C^*_0(Z,Z^*)$ still 
make sense if we set $q=1$. 
In this case, $\alpha_q = \id$ and 
both algebras become commutative.  
If $f_k\in C_0([0,\infty))$ and $f_k(0)=0$ for $k\neq 0$, then 
$f_kU^k$ can be viewed as a function in $C_0(\C)$ 
by using Euler's formula $Z=|Z|\hs \e^{\im \theta}$ 
and assigning 
$|Z|\hs \e^{\im \theta} \mapsto f_k(|Z|)\hs \e^{\im \theta k}$. 
Note that  $f_k(0) =0$ if $k\neq 0$ is crucial since $|Z|=0$ 
corresponds to the unique point $Z=0$ 
so that the function must be independent from~$\theta$. 
Furthermore, one easily checks that the algebra of functions 
\mbox{$|Z|\hs \e^{\im \theta} \mapsto \underset{\text{finite}}{\sum} f_k(|Z|)\hs \e^{\im \theta k}$}
separates the points 
of $\C\cup \{\infty\}$. Here, it is crucial to include functions 
$f_0\in C_0([0,\infty))$ satisfying $f(0)\neq 0$ since otherwise 
the points $0$ and $\infty$ could not be distinguished.  
By the Stone--Weierstrass theorem, the algebra of 
functions just defined generates $C_0(\C)$. 
Finally, since $C_0(\C)$ is commutative, 
the universal norm coincides with the supremum norm. 
Therefore, we obtain 
$C_0^*(Z,Z^*)=C_0(\C)$ for  $q=1$. 
This motivates the following definition. 
\begin{defn} \label{def}
We say that 
$$
C_0(\C_q):=\|\cdot\|\text{-}\mathrm{cls}\Big\{ 
\sum_{k=M}^N f_k \hs U^k\in C_0( [0,\infty))\rtimes \Z \,:\, f_k(0)=0\ \, 
\text{for all}\ \,k\neq 0 
\Big\}
$$ 
is the C*-algebra of continuous functions vanishing 
at infinity on the quantum complex plane $\Cq$. 
Its unitization  
$$
C(\mathrm{S}^2_q):= C_0(\C_q)\oplus \C
$$
is called the quantum sphere generated by $\Cq$. 
\end{defn}

\section{Final remarks} 
The calculations in \cite{CW} 
 show that $C(\mathrm{S}^2_q)$ has the same $K$-groups as the classical 2-sphere, 
 that is, $K_0(C(\mathrm{S}^2_q))\cong \Z\oplus \Z$ and $K_1(C(\mathrm{S}^2_q))\cong 0$. 
 It is expected that the non-trivial part of $K_0(C(\mathrm{S}^2_q))$ can be described by analogues 
 of the classical Bott projections 
 $$
   P_{n} :=  \left(
\begin{array}{c}
1  \\
\zeta^{*n}
\end{array}\right) \mbox{$\frac{1}{1+q^{n(n+1)}|\zeta|^{2n}}$}\left(\begin{array}{cc} 1 &\hsp \zeta^{n}\end{array}\right) 
= \left(
\begin{array}{ll}
\mbox{$\frac{1}{1+q^{n(n+1)}|\zeta|^{2n}}$} & \mbox{$\frac{1}{1+q^{n(n+1)}|\zeta|^{2n}}$}\, \zeta^{n}\\[8pt]
\mbox{$\frac{1}{1+q^{-n(n-1)}|\zeta|^{2n}}$} \hs \zeta^{*n} & \mbox{$\frac{q^{-n(n-1)}|\zeta|^{2n}}{1+q^{-n(n-1)}\,|\zeta|^{2n}}$} 
\end{array}\hsp\right)\hsp ,\\[8pt]
 $$
 $$
   P_{-n} :=  \left(
\begin{array}{c}
1  \\
\zeta^{n}
\end{array}\right) \mbox{$\frac{1}{1+q^{-n(n-1)}|\zeta|^{2n}}$}\left(\begin{array}{cc} 1 &\hsp \zeta^{*n}\end{array}\right) 
= \left(
\begin{array}{ll}
\mbox{$\frac{1}{1+q^{-n(n-1)}|\zeta|^{2n}}$} & \mbox{$\frac{1}{1+q^{-n(n-1)}|\zeta|^{2n}}$}\, \zeta^{*n}\\[8pt]
\mbox{$\frac{1}{1+q^{n(n+1)}|\zeta|^{2n}}$} \, \zeta^{n} & \mbox{$\frac{q^{n(n+1)}|\zeta|^{2n}}{1+q^{n(n+1)}|\zeta|^{2n}}$} 
\end{array}\hsp \right)\hsp ,
 $$
 and that the pairing with the generators of the  $K$-homology group  $K^0(C(\mathrm{S}^2_q))\cong \Z\oplus \Z$ 
 computes the rank  (equal to  $1$) and 
 the "winding number" $\pm n$ of  projective module given by $P_{\pm n}$, $n\in\N$. 
 
 \section*{Acknowledgments} 
 We are grateful to the referee for useful comments. 
 The second author thanks the organizers of the conference "Operator Algebras and Quantum Groups - 
 a conference in honour of S. L. Woronowicz's seventieth birthday" for making his participation  possible. 
This work was carried out with partial financial support from the research grant PIRSES-GA-2008-230836.

\end{document}